\newcounter{notes}%
\theoremstyle{plain}
\newtheorem{theorem}{Theorem}
\newtheorem*{theoremA}{Theorem A}
\newtheorem*{theorem*}{Theorem}
\newtheorem{lemma}[theorem]{Lemma}
\theoremstyle{definition}
\newtheorem{conjecture}[theorem]{Conjecture}
\title{Uniform Lipschitz extension in bounded curvature}           
\author{Fran\c{c}ois GU\'ERITAUD}
\address{CNRS \& Universit\'e de Lille \\ Laboratoire Paul Painlev\'e \\ 59655 Villeneuve d'Ascq Cedex, France}
\email{Francois.Gueritaud@univ-lille.fr}
\thanks{This work was partially supported by the Agence Nationale de la Recherche under the grant DiscGroup (ANR-11-BS01-013) and through the Labex \textsc{Cempi} (ANR-11-LABX-0007-01).
It was partially completed during a residence term at \textsc{Ih\'es}, where it received funding from the European Research Council under the EU Horizon 2020 research and innovation program (ERC starting grant DiGGeS, grant agreement 715982).
The FlixBus company provided opportunities to devote time and focus to this material. \\
\emph{Keywords: negative curvature, Lipschitz extension. MSC-2010: 54C20.}
}
\begin{document}

\begin{abstract} We prove a uniform extension result for contracting maps defined on subsets of Hadamard manifolds subject to curvature bounds.\end{abstract}

\maketitle

\section*{Introduction}
\subsection*{Lipschitz extension problem}
Let $X$, $Y$ be metric spaces.
Consider $X'\subset X$ and a Lipschitz map $f:X'\rightarrow Y$.
Can we extend $f$ to $F:X\rightarrow Y$ with the same constant $\mathrm{Lip}(F)=\mathrm{Lip}(f)$? 
Failing that, can we bound the loss?
This potential ``loss'' can be encapsulated in a function $\mathcal{L}_{X,Y}$:
$$\begin{array}{rlcl} \mathcal{L}_{X,Y}: & \mathbb{R}^+ & \longrightarrow & \quad \mathbb{R}^+ \\ & C & \longmapsto &\displaystyle  
\sup_{\substack{X'\subset X \\ f:X'\rightarrow Y \\ \mathrm{Lip}(f)\leq C}} \; 
\inf_{\substack{F:X\rightarrow Y \\ F|_{X'}=f}} \; 
\mathrm{Lip}(F).\end{array}$$
For example, maps to~$\mathbb{R}$, or more generally to a metric tree $T$, can always be extended without loss~\cite{m, las}: $\mathcal{L}_{X, \mathbb{R}}(C)=\mathcal{L}_{X, T}(C)=C$ for all $C\geq 0$.
Kirszbraun~\cite{k} proved that $\mathcal{L}_{X, Y}(C)=C$ when $X,Y$ are Euclidean spaces.

Recall that a Hadamard manifold is a complete, simply connected Riemannian manifold of nonpositive sectional curvature.
Lang and Schr\"oder~\cite{las}, extending work of Valentine~\cite{v} for the constant-curvature case, proved:
\begin{theoremA} \cite{las}
Let $\kappa_0, \kappa'_0<0$ be constants.
If $X,Y$ are Hadamard manifolds with $\kappa_X \geq \kappa_0$ and $\kappa_Y \leq \kappa'_0$, then $\mathcal{L}_{X,Y}(C)=C$ for all $C\geq \sqrt{\kappa_0/\kappa'_0}$.
\label{lasch}
\end{theoremA}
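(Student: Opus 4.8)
The bound $\mathcal{L}_{X,Y}(C)\geq C$ is immediate (take $X'=X$ and $F=f$), so the content is the reverse inequality: every $C$-Lipschitz map $f\colon X'\to Y$ with $X'\subset X$ extends to a $C$-Lipschitz map $F\colon X\to Y$. My plan is to first rescale. Multiplying the metric of $Y$ by $1/C$ turns $f$ into a $1$-Lipschitz map to a Hadamard manifold of curvature $\leq C^{2}\kappa'_0$, and the hypothesis $C\geq\sqrt{\kappa_0/\kappa'_0}$ says precisely that $\kappa_0\geq C^{2}\kappa'_0$; rescaling both metrics by one further common factor then reduces the whole statement to the case $C=1$ with $\kappa_X\geq -1\geq\kappa_Y$. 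The point of this normalisation is that the sectional curvatures of $X$ and of $Y$ now lie on opposite sides of the \emph{same} value $-1$, so both manifolds admit hyperbolic space $\mathbb{H}^{N}$ (of whatever dimension is convenient) as a comparison model; it is this coincidence that makes $\sqrt{\kappa_0/\kappa'_0}$ the right threshold.

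Next, a Zorn's-lemma argument over the poset of $1$-Lipschitz maps extending $f$ --- defined on closed subsets $X'\subseteq X''\subseteq X$, ordered by extension, a chain having an upper bound given by the union completed to its closure --- reduces the problem to the \emph{one-point extension}: given a $1$-Lipschitz $f\colon X'\to Y$ and $x_0\in X\setminus X'$, produce $y_0\in Y$ with $d(y_0,f(x))\leq d(x_0,x)$ for all $x\in X'$, i.e.\ $\bigcap_{x\in X'}\overline B\!\left(f(x),d(x_0,x)\right)\neq\varnothing$. Since a Hadamard manifold is proper, closed balls are compact, so this intersection is nonempty as soon as every finite subfamily is; hence it suffices to prove the following finite statement: \emph{if $p_0,p_1,\dots,p_n\in X$ and $q_1,\dots,q_n\in Y$ satisfy $d(q_i,q_j)\leq d(p_i,p_j)$ for all $i,j$, then there is $q_0\in Y$ with $d(q_0,q_i)\leq d(p_0,p_i)$ for all $i$.}

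To prove this finite statement I would run a circumcentre argument, split across the two curvature bounds. Put $r_i=d(p_0,p_i)>0$ and let $q^{*}$ be a point of $Y$ minimising the proper convex function $h(q)=\max_i d(q,q_i)/r_i$; the claim is that $q_0:=q^{*}$ works, i.e.\ $h(q^{*})\leq 1$. Let $I$ be the set of active indices (those with $d(q^{*},q_i)=h(q^{*})\,r_i$) and let $u_i\in T_{q^{*}}Y$ be the unit initial vectors of the geodesics from $q^{*}$ to the $q_i$ for $i\in I$. The first-variation formula together with the minimality of $q^{*}$ shows that no tangent vector at $q^{*}$ has positive inner product with every $u_i$, that is, $0\in\operatorname{conv}\{u_i:i\in I\}$. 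Now transport the data into $\mathbb{H}^{N}$ with $N$ large: realising the finitely many unit vectors $u_i$ isometrically inside $T_{\overline q^{*}}\mathbb{H}^{N}$ (this preserves their Gram matrix, so $0$ still lies in their convex hull) and setting $\overline q_i=\exp_{\overline q^{*}}(h(q^{*})\,r_i\,u_i)$, the hyperbolic law of cosines and the $\mathrm{CAT}(-1)$ inequality in $Y$ give $d_{\mathbb{H}}(\overline q_i,\overline q_j)\leq d_Y(q_i,q_j)$; similarly, realising the unit vectors $v_i\in T_{p_0}X$ pointing to the $p_i$ isometrically inside $T_{\overline p_0}\mathbb{H}^{N}$ and setting $\overline p_i=\exp_{\overline p_0}(r_i\,v_i)$, Toponogov's angle comparison (curvature $\geq -1$) gives $d_{\mathbb{H}}(\overline p_i,\overline p_j)\geq d_X(p_i,p_j)$. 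Combining,
$$d_{\mathbb{H}}(\overline q_i,\overline q_j)\ \leq\ d_Y(q_i,q_j)\ \leq\ d_X(p_i,p_j)\ \leq\ d_{\mathbb{H}}(\overline p_i,\overline p_j),\qquad d_{\mathbb{H}}(\overline p_0,\overline p_i)=r_i,$$
so $\overline p_i\mapsto\overline q_i$ is a $1$-Lipschitz map between finite subsets of $\mathbb{H}^{N}$. By Kirszbraun's theorem in hyperbolic space --- the constant-curvature case, due to Valentine~\cite{v} --- it extends over $\overline p_0$ to a point $\overline q_0\in\mathbb{H}^{N}$ with $d_{\mathbb{H}}(\overline q_0,\overline q_i)\leq r_i$ for all $i\in I$. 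Finally, because $0\in\operatorname{conv}\{u_i:i\in I\}$ and distance functions are convex in $\mathrm{CAT}(0)$ spaces, $\overline q^{*}$ is a global minimiser over $\mathbb{H}^{N}$ of $\overline q\mapsto\max_{i\in I}d_{\mathbb{H}}(\overline q,\overline q_i)/r_i$; evaluating this function at $\overline q^{*}$ and at $\overline q_0$ yields $h(q^{*})=\max_{i\in I}d_{\mathbb{H}}(\overline q^{*},\overline q_i)/r_i\leq\max_{i\in I}d_{\mathbb{H}}(\overline q_0,\overline q_i)/r_i\leq 1$, as desired.

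The real difficulty, I expect, is concentrated in the model-space ingredient and its interface with everything else. First, Kirszbraun's theorem in $\mathbb{H}^{N}$ with the sharp constant is itself a nontrivial fact, and it is exactly where the threshold is seen to be optimal: one either invokes Valentine~\cite{v} or reproves it --- for instance by the same circumcentre argument, reduced to the assertion that a configuration whose weighted circumcentre is ``surrounded'' (the condition $0\in\operatorname{conv}\{u_i\}$) cannot be strictly dominated, which rests in turn on the negative-curvature inequality $\ell(ta,tb,\theta)\geq t\,\ell(a,b,\theta)$ for $t\geq1$ (with $\ell$ the side length furnished by the hyperbolic law of cosines) and on the boundedness of circumradii; strict negativity of the curvature enters precisely here. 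Second, one must check carefully that the angle data at $q^{*}$ in $Y$ and at $p_0$ in $X$ --- genuine Riemannian angles, hence carrying positive-semidefinite Gram matrices --- can be realised \emph{simultaneously} inside one $\mathbb{H}^{N}$, and that all four comparison inequalities above point the right way; this bookkeeping, together with the existence of minimisers and the first-variation step, is routine but unforgiving. The remaining reductions --- to $C=1$, the Zorn argument, the compactness passage to finite configurations --- are straightforward.
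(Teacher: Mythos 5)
Your proof is correct, but its concluding mechanism differs from the one in \S\ref{prolasch}. The common core is the same: reduce to a one-point extension, minimize the weighted radius function (your $h$ is the paper's $\varphi_\xi$), and use first variation to get that the origin lies in the convex hull of the unit directions toward the active points, i.e.\ the analogue of~\eqref{convhull}; your comparison inequalities are exactly~\eqref{CAT}.a--b and point the right way. Where you diverge is afterwards: the paper extracts from the convex-hull relation a \emph{single} pair of indices with $\widehat{x_i\xi x_j}\leq\widehat{y_i\eta y_j}$ (the Gram-matrix trick) and closes the argument with the two-point chain~\eqref{conv}, whose engine is the superhomogeneity/convexity of $\mathcal{D}_\theta$; you instead transport \emph{both} whole configurations into a model $\mathbb{H}^N$ (legitimate, since the Gram matrices of genuine Riemannian tangent vectors are realizable and the convex-hull relation is Gram-determined), invoke Valentine's constant-curvature Kirszbraun theorem~\cite{v} there, and then use convexity of the max-of-distances function plus the surrounded-circumcentre condition to pull the bound $h(q^*)\leq 1$ back. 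This is not circular --- the constant-curvature case is a strictly special, classical input, and the paper itself frames Theorem~A as Lang--Schr\"oder's extension of it~\cite{las} --- and your route is more modular, with the finite-configuration/Helly reduction replacing the paper's compact-$X'$ setup. What it costs is self-containedness: the trigonometric core is hidden in the citation (your sketched fallback for reproving it is essentially the paper's $\mathcal{D}_\theta$ argument), and it does not directly expose the quantitative handles ($C_\xi$, the chosen pair $x_1,x_2$, the chain~\eqref{conv}) that the paper deliberately sets up here so as to perturb them in the proof of Lemma~\ref{1point}.
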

\subsubsection*{Main result}
Up to scaling, we may and always will assume $\kappa_0=\kappa'_0=-1$.
In that case, the above theorem also gives: $\mathcal{L}_{X,Y}(C)\leq 1$ when $C\leq 1$. 
The goal of this note is to prove the following refinement:
\begin{theorem} \label{main}
For any $C<1$, $K\leq -1$ and $m \in \mathbb{N}$, there exists $C'<1$ such that 
for any Hadamard manifolds $X,Y$ of dimension $\leq m$ 
satisfying $\kappa_X\geq -1 \geq \kappa_Y \geq K$, 
one has $\mathcal{L}_{X,Y}(C)\leq C'$.
\end{theorem}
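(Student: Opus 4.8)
The plan is to reduce to a finite source set and then to prove a quantitative one-point extension estimate in model spaces.

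\emph{Step 1: reduction to finite $X'$.} Hadamard manifolds are proper and separable, so I may write $X'$ as the closure of an increasing union of finite sets $X'_k$. If each $f|_{X'_k}$ admits an extension $F_k\colon X\to Y$ with $\mathrm{Lip}(F_k)\le C'$, then the maps $F_k$ are uniformly $C'$-Lipschitz and all agree on the nonempty set $X'_1$, hence are uniformly bounded on compacta; Arzel\`a--Ascoli produces a locally uniform limit $F$ with $\mathrm{Lip}(F)\le C'$ and $F|_{X'_k}=f|_{X'_k}$ for every $k$, so $F|_{X'}=f$. Thus I may assume $X'$ finite. For finite $X'$ I would build $F$ by the classical one-point-at-a-time procedure (Zorn's lemma on partial Lipschitz extensions): the only thing to verify is that at a new point $x$ the constraint set $\bigcap_a \overline{B}_Y(g(a),c\,d(x,a))$ is nonempty, and by properness of $Y$ (compactness of one such ball plus the finite intersection property) together with Helly's theorem for convex subsets of the $m$-dimensional manifold $Y$, this reduces to the nonemptiness of an intersection of at most $m+1$ balls.

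\emph{Step 2: the one-point estimate.} Here lies the difficulty: for $c<1$ a $c$-Lipschitz configuration of $m+1$ points in $Y$ need not extend to a further point at constant $c$ --- already three points at pairwise distance $2$ in $\mathbb{H}^2$, sent to pairwise distance $1$, fail this, since the relevant circumradii in $\mathbb{H}^2$ satisfy $R(1)/R(2)>\tfrac12$. The plan is therefore to propagate through the induction, instead of the Lipschitz condition, a \emph{relaxed} one of the form $d(g(a),g(b))\le c\,\omega\bigl(d(a,b)\bigr)$ for a fixed increasing concave function $\omega$ with $\omega(0)=0$: concavity gives $d(g(a),g(b))\le c\,\omega'(0)\,d(a,b)$, so the output map is $C'$-Lipschitz with $C'=c\,\omega'(0)$, while $\omega(t)\ge \omega'(\infty)\,t$ makes the hypothesis $\mathrm{Lip}(f)\le C$ imply the relaxed condition as soon as $c\,\omega'(\infty)\ge C$; since $C<1$ there is room to choose $c$ and $\omega$ (depending on $C$, $K$, $m$) with $C\le c\,\omega'(\infty)\le c\,\omega'(0)<1$. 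Subadditivity of $\omega$, a consequence of concavity, makes every \emph{pairwise} intersection of the relevant balls automatic, so what remains is the Helly upgrade to $(m+1)$-fold intersections. For that I would pass, via Rauch and Toponogov comparison, to model spaces --- $X$ dominated by $\mathbb{H}^m$ of curvature $-1$, and $Y$ sandwiched between the models of curvature $K$ and $-1$ --- and verify the estimate there: at small scales the configuration is essentially Euclidean, where Kirszbraun's theorem handles it losslessly; at large scales the space is tree-like, where Helly number $2$ suffices; and at the intermediate scales the concavity of $\omega$ is spent against the curvature. The constant one then needs is a continuous function of the curvature parameter in the \emph{compact} interval $[K,-1]$ and of the dimension at most $m$, and is $<1$ at every value, so its supremum $C'$ is $<1$.

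\emph{Main obstacle.} The crux is exactly this uniform estimate at intermediate scales: one must control $m+1$ balls simultaneously --- so the dimension bound, entering through Helly, is unavoidable --- while keeping the constant strictly below $1$, and the concavity budget of $\omega$ that buys this is most strained precisely there, so it must be balanced carefully against the slack $1-C$ and the lower curvature bound $K$. Checking that the relaxed condition genuinely propagates is by contrast the easy part, since subadditivity makes the choice of point in the nonempty intersection immaterial. A less explicit alternative would bypass the model computation altogether, replacing it by a Gromov--Hausdorff compactness argument for the class of admissible targets $Y$ (dimension at most $m$, curvature in $[K,-1]$) together with an upper semicontinuity property of $\mathcal{L}$ under such limits; this too would yield only a qualitative $C'<1$.
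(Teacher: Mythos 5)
Your overall architecture (reduce to finitely many points, extend one point at a time, and make room for the unavoidable loss by propagating a relaxed condition $d(g(a),g(b))\le c\,\omega(d(a,b))$ with $\omega$ concave, then invoke a Helly-type reduction to $(m+1)$ balls) is a genuinely different route from the paper, and its bookkeeping (subadditivity gives pairwise intersections, $\omega'(0)$ and $\omega'(\infty)$ convert between the relaxed condition and Lipschitz bounds) is coherent. But the proof has a genuine gap exactly where you locate it: the ``intermediate scale'' estimate in Step 2 is the entire theorem, and it is asserted, not proved. You never exhibit a concave $\omega$ (depending only on $C$, $K$, $m$) for which the $(m{+}1)$-fold intersection $\bigcap_i \overline{B}_Y\bigl(y_i, c\,\omega(d(x,a_i))\bigr)$ is nonempty for \emph{every} admissible configuration. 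The uniformity claim at the end of Step 2 (``the constant is a continuous function on the compact interval $[K,-1]$ and dimension $\le m$, hence its supremum is $<1$'') is not justified: the parameter that actually varies is the configuration of up to $m+2$ points, whose mutual distances range over a non-compact set, and a single configuration can mix tiny and huge distances, so the small-scale/large-scale/intermediate trichotomy does not reduce the problem to a compact family. Moreover, the large-scale step is not literally true as stated: a Hadamard manifold with $\kappa\le -1$ is only \emph{coarsely} tree-like, so metric balls do not have Helly number $2$; one only gets tree-like behaviour up to an additive error (the paper's $\Delta=\log 2$ in \eqref{addloss}), and showing that this additive loss can be absorbed by the concavity budget of $\omega$ simultaneously with the small- and medium-scale losses is precisely the quantitative work that is missing. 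The alternative you mention (Gromov--Hausdorff compactness of the class of targets plus semicontinuity of $\mathcal{L}$) suffers from the same issue: the relevant non-compactness is in the configurations and subsets $X'$, not only in the choice of $Y$, and the semicontinuity of $\mathcal{L}$ under such limits is itself unproved.

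For comparison, the paper deliberately avoids iterating one-point extensions (where losses pile up) and instead uses the one-point estimate only to produce ``optimal candidate'' images with a uniform gap $C^*<1$ (Lemma~\ref{1point}), patches local Kirszbraun extensions on an $\varepsilon$-net, proves a coarse tree-type estimate for net points at distance $\ge R$ (Lemma~\ref{qtree}, with the additive loss $\Delta$ absorbed by the buffer inequalities \eqref{buffer}), extends each of $N$ ``bin'' maps losslessly by Theorem~A, and finally recovers a uniform gain $C'=1-(1-\sqrt{C^*})/N$ by barycentric averaging; the hypotheses $K$ and $m$ enter only through the local bi-Lipschitz comparison and the packing number $N$ in \eqref{bins}. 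Your plan would use $K$ and $m$ through model comparisons and Helly instead, which is legitimate in principle, but until the relaxed-modulus intersection estimate is actually established at all scales with a uniform margin, the proposal is an outline of a strategy rather than a proof.
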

The $X=Y=\mathbb{H}^2$ case was conjectured in~\cite[App.\ C]{guk}, which put forward a strategy when $X'$ has bounded diameter.

\subsection*{About the method}
Our proof is based on the template of Lang and Schr\"oder's proof of Theorem~A, which we will recall in~\S\ref{prolasch} (slightly simplified, as~\cite{las} is set in the context of Alexandrov spaces).
The extra ingredients, which extend and uniformize arguments of~\cite{guk}, are based on the notion that under negative curvature, both in the small-scale limit (Euclidean geometry) and large-scale limit (real trees), loss-less extension is known to hold.
Thus, loss ($\mathcal{L}_{X,Y}(C)>C$) is in a sense a medium-range phenomenon, and can be controlled using a form of compactness and covering arguments.

When extending $f:X'\rightarrow Y$ to a single point $\xi \in X\smallsetminus X'$, we will see in~\S\ref{prolasch} that there is usually a natural ``optimal'' image $F(\xi)$, relative to the set $X'$ where the map is already defined. 
Given a second point $\xi'$, we can then assign it an optimal image relative to $X'\cup \{\xi\}$, then pass to a third point $\xi''$ and so on, studying the loss incurred at each step.
One difficulty, which could cause the losses to pile up, is that the notion of ``optimal'', being relative to $X'\cup\{\xi, \xi', \dots\}$, changes as we go.

However, as pointed out in~\cite{las}, this difficulty \emph{disappears} when $Y$ is a metric tree: then, taking each $\xi\in X\smallsetminus X'$ to its optimal image (relative to $X'$ only) yields a globally Lipschitz map, with no loss.
This key feature, together with the fact that the curvature bounds force $Y$ coarsely to behave somewhat like a tree at large distances, is what allows us to prove Theorem~\ref{main}.
To patch together maps defined on different regions of $X$, we will use a standard interpolation procedure described in~\S\ref{interpol}.

\subsubsection*{Plan}
Section~\S\ref{prolasch} recalls the proof of Theorem~A; Section~\S\ref{secmain} proves Theorem~\ref{main}. Section~\S\ref{seconj} indulges in some speculation.

\subsection*{Notation}
Distances in metric spaces are all denoted $d$.

\noindent The open ball centered at $\xi$, of radius $r$, is written $\mathbb{B}_\xi(r)$. For a ball of unspecified center, we sometimes write $\mathbb{B}(r)$.

\noindent Given a point $\xi$ in a Hadamard manifold $X$, we write $\exp_\xi: \mathrm{T}_\xi(X) \rightarrow X$ the exponential map, and $\log_\xi$ its inverse.

\noindent Given $x,z\in X\smallsetminus \{\xi\}$, the notation $\widehat{x\xi z} \in [0,\pi]$ then refers to the angle between vectors $\log_\xi(x)$ and $\log_\xi(z)$, for the Euclidean metric on $\mathrm{T}_\xi(X)$.

\noindent The volume measure on $X$ is written $\mathrm{Vol}_X$.

\subsection*{Acknowledgements}
My interest for the Lipschitz extension problem in negative curvature started with Fanny Kassel and our work~\cite{guk}, which tackled an equivariant version. I am grateful to her and to Urs Lang for discussions related to this material.

\section{Proof of Theorem~A}\label{prolasch}
To build loss-less extensions, it is enough to do it one point $\xi\in X$ at a time: indeed, we can then repeat for a dense sequence $(\xi_n)_{n\in \mathbb{N}}$ of $X$, and pass to all of $X$ by continuity. 

Let $X'\subset X$ and $f:X'\rightarrow Y$ be $C$-Lipschitz with 
$$C\geq 1,$$
where $X,Y$ are Hadamard manifolds subject to curvature bounds $\kappa_X \geq -1 \geq \kappa_Y$.
We can restrict attention to $X'$ compact, nonempty. Consider $\xi \in X\smallsetminus X'$: the function defined by
$$\begin{array}{rcl} \varphi_\xi:  Y & \longrightarrow & \mathbb{R}^+  \\ 
    y  & \longmapsto & \displaystyle \max_{x\in X'} \, \frac{d(y,f(x))}{d(\xi, x)}
\end{array}$$
is proper and convex on $Y$, hence achieves a minimum 
\begin{equation}\label{cxi} C_\xi:=\min \varphi_\xi = \varphi_\xi (\eta) \end{equation}
at some $\eta\in Y$ (in fact unique). 
We can think of $\eta$ as an ``optimal candidate for $F(\xi)$'': Theorem~A will follow if we can prove 
$$C_\xi \leq C.$$ 
If $C_\xi \leq 1$ we are done. If $C_\xi\geq 1$, define the compact set 
\begin{equation}\label{xxi} X_\xi:=\Big \{x\in X', \frac{d(\eta, f(x))}{d(\xi, x)}=C_\xi \Big \}. \end{equation}

The exponential of any linear hyperplane $V \subset \mathrm{T}_\eta Y$ separates $Y$ into two half-spaces, each of which contains points of $f(X_\xi)$ in its closure: if not, we could push $\eta$ towards $f(X_\xi)$ (perpendicularly to $V$) to reduce $\varphi_\xi(\eta)$, contradicting minimality. 
Hence, $\eta$ belongs to the convex hull of some points $y_i=f(x_i)$, $1\leq i \leq n$ where $x_i \in X_\xi$:
\begin{equation} \label{convhull} \sum_{i=0}^n \lambda_i \log_\eta (y_i) = 0_\eta \in \mathrm{T}_\eta Y \end{equation}
for some reals $\lambda_i>0$.
Since $x_i\in X_\xi$, the lengths $\ell_i:=d(\xi,x_i)= \Vert \log_\xi (x_i) \Vert$ satisfy $C_\xi \ell_i = d(\eta, y_i)= \Vert \log_\eta (y_i) \Vert$. 
We can then write
\begin{align*} 0 &\leq \Big \Vert C_\xi \sum_{i=0}^n \lambda_i \log_\xi (x_i) \Big \Vert^2 - \Big \Vert \sum_{i=0}^n \lambda_i \log_\eta (y_i) \Big \Vert^2 \\
&= C_\xi^2 \sum_{i,j} \lambda_i \lambda_j \,  \ell_i \ell_j \, \big( \cos \widehat{x_i \xi x_j} - \cos \widehat{y_i \eta y_j} \big )
\end{align*}
hence at least one summand with $i\neq j$ is $\geq 0$, which happens if and only if $\widehat{x_i \xi x_j} \leq \widehat{y_i \eta y_j}$. 
Hence, up to reindexing, we may assume 
\begin{equation} \theta := \widehat{x_1 \xi x_2} \leq \widehat{y_1\eta y_2} =:\theta'. \label{cot} \end{equation}

Let $\mathcal{D}_\theta(\ell, \ell')$ denote the distance, in the hyperbolic plane $\mathbb{H}^2$, between the far ends of two segments of lengths $\ell, \ell'$ starting from a common vertex, an angle $\theta$ apart.
A~well-known trigonometric formula gives explicitly
\begin{equation} \label{trigo}
\mathcal{D}_\theta(\ell, \ell') = \mathrm{Arccosh} (\cosh \ell \, \cosh \ell' - \sinh \ell \, \sinh \ell' \, \cos \theta) 
\end{equation}
but we will mostly use the following facts: the function $\mathcal{D}_\theta$ is convex in its two arguments, vanishes at $(0,0)$, and depends monotonically on $\theta$.
The Cartan-Alexandrov-Toponogov or CAT($-1$) comparison inequalities, whose interesting history is recounted in~\cite{cat}, say that
\begin{equation} \label{CAT}
\begin{array}{llll}
\mathrm{a.} & d(x_1, x_2) & \leq & \mathcal{D}_{\theta}(\ell_1, \ell_2) \\ 
\mathrm{b.} & d(y_1, y_2) & \geq & \mathcal{D}_{\theta'}(C_\xi \ell_1, C_\xi \ell_2) 
\end{array} \end{equation}
due to the curvature bounds $\kappa_X \geq -1 \geq \kappa_Y$.
Therefore,

\begin{align} 
 C\, d(x_1, x_2) & \geq  d(y_1, y_2) & \text{(Lipschitz bound)} \notag \\
& \geq \mathcal{D}_{\theta'}(C_\xi\ell_1, C_\xi\ell_2) & \text{by \eqref{CAT}}&\mathrm{.b} \notag \\
& \geq \mathcal{D}_{\theta}(C_\xi\ell_1, C_\xi\ell_2) & \text{by \eqref{cot}} \notag \\ 
& \geq  C_\xi \, \mathcal{D}_{\theta}(\ell_1, \ell_2) & \label{conv} \\
& \geq  C_\xi \, d(x_1, x_2). & \text{by \eqref{CAT}}&\mathrm{.a} \notag 
\end{align}
where \eqref{conv} uses convexity of $\mathcal{D}_\theta$ and $C_\xi\geq 1$. Hence $C_\xi\leq C$ as desired, 
proving Theorem~A. \qed

\section{Proof of Theorem~\ref{main}} \label{secmain}

\subsection{One-point extension}
We start by bounding the loss for extensions to a single point.
%

\begin{lemma} \label{1point}
For any $C<1$ there exists $C^*<1$ such that for any Hadamard manifolds $X,Y$ satisfying $\kappa_X \geq -1 \geq \kappa_Y$, any $X'\subset X$ and any $\xi\in X\smallsetminus X'$, every $C$-Lipschitz map $f:X'\rightarrow Y$ has a $C^*$-Lipschitz extension to $X'\sqcup \{\xi\}$.
\end{lemma}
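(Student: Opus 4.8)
The plan is to run the one-point machinery from the proof of Theorem~A and then upgrade its conclusion from $C_\xi\leq 1$ to a \emph{uniform} bound $C_\xi\leq C^*<1$ by a compactness argument. The construction of $\varphi_\xi$, of $C_\xi$ as in \eqref{cxi}, of $X_\xi$ as in \eqref{xxi}, the balancing relation \eqref{convhull}, and the extraction of a pair $x_1,x_2\in X_\xi$ with $\ell_i:=d(\xi,x_i)$, $y_i:=f(x_i)$, $d(\eta,y_i)=C_\xi\ell_i$ and $\theta:=\widehat{x_1\xi x_2}\leq\widehat{y_1\eta y_2}=:\theta'$ all go through verbatim for $C<1$; only the final convexity step in \eqref{conv} used $C_\xi\geq 1$. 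What survives is the chain
$$C\,d(x_1,x_2)\;\geq\; d(y_1,y_2)\;\geq\;\mathcal{D}_{\theta'}(C_\xi\ell_1,C_\xi\ell_2)\;\geq\;\mathcal{D}_{\theta}(C_\xi\ell_1,C_\xi\ell_2),\qquad d(x_1,x_2)\leq\mathcal{D}_\theta(\ell_1,\ell_2).$$
If $C_\xi$ were $1$ these inequalities would already force $C\geq 1$; the lemma asserts that $C_\xi$ cannot even approach $1$ when $C$ stays away from it.

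So I would argue by contradiction: fix $C<1$ and suppose there are Hadamard manifolds $X_n,Y_n$ with $\kappa_{X_n}\geq-1\geq\kappa_{Y_n}$, subsets $X'_n$, points $\xi_n$ and $C$-Lipschitz maps $f_n$ with optimal one-point extension constants $C_{\xi_n}\to 1$. To each $n$ the above attaches the finite-dimensional data $(\ell_1^n,\ell_2^n,\theta_n,\theta'_n)$ and the balancing relation for $\eta_n$, and I would extract a limit, organizing the cases by the joint behaviour of $d(x_1^n,x_2^n)$, the scales $\ell_i^n$, and the angles. (i) If $d(x_1^n,x_2^n)$ and the $\ell_i^n$ stay bounded away from $0$ and $\infty$, passing to a limit and using $C_{\xi_n}\to 1$ turns the chain into $\mathcal{D}_{\theta^*}(\ell_1^*,\ell_2^*)\leq C\,\mathcal{D}_{\theta^*}(\ell_1^*,\ell_2^*)$ with the quantity on both sides positive, i.e.\ $C\geq 1$, a contradiction. (ii) If the scales go to $0$, use that $\mathcal{D}_\theta(\ell_1,\ell_2)=\sqrt{\ell_1^2+\ell_2^2-2\ell_1\ell_2\cos\theta}\,(1+o(1))$, a function homogeneous of degree~$1$; the chain degenerates to $C\geq C_{\xi_n}(1+o(1))$, contradicting $C_{\xi_n}\to 1$ — this is the infinitesimal incarnation of Kirszbraun's loss-less extension. (iii) If the scales go to $\infty$, use the large-scale (tree-like) asymptotics $\mathcal{D}_\theta(\ell_1,\ell_2)=\ell_1+\ell_2+2\log\sin(\theta/2)+o(1)$ when $\theta$ stays bounded below, which again forces $C_{\xi_n}\leq C+o(1)$; when instead $\theta_n\to 0$, invoke the balancing relation \eqref{convhull}: the vectors $\log_{\eta_n}(y_i^n)$ have $0$ in their convex hull, so some pair subtends an angle $>\pi/2$ at $\eta_n$ (expand $\|\sum\mu_i v_i\|^2=0$), and for that pair $d(y_i^n,y_j^n)$ is comparable to $C_{\xi_n}(\ell_i^n+\ell_j^n)$, which exceeds $C\,d(x_i^n,x_j^n)\leq C\,\mathcal{D}_{\theta}(\ell_i^n,\ell_j^n)\leq C(\ell_i^n+\ell_j^n)$ once $n$ is large, again contradicting $C_{\xi_n}>C$.

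The main obstacle I expect is precisely the mixed regime in (iii) — scales tending to $\infty$ while $\theta_n\to 0$, where the comparison configuration looks like one long geodesic and the naive pairwise chain gives no slack — which forces one to use the \emph{global} minimality of $\eta_n$ (the convex-hull condition \eqref{convhull}) rather than the single inequality for the pair $(x_1^n,x_2^n)$; and, more generally, fitting the three regimes together uniformly across the degenerate overlaps ($\ell_1^n/\ell_2^n\to 0$ or $\infty$, $\theta_n\to 0$ or $\pi$) so that one $C^*<1$ works. The absence of any dimension hypothesis in the statement is reassuring here: everything should be expressible through this bounded amount of numerical data, so no Gromov--Hausdorff limit of the manifolds themselves is needed.
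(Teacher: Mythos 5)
Your reduction to the numerical data of the Theorem~A proof is the right starting point, and your large-scale subcase (use the balancing relation to get an obtuse angle at $\eta_n$, then $d(y_i,y_j)\geq C_{\xi_n}(\ell_i+\ell_j)-\Delta$ beats $C\,d(x_i,x_j)\leq C(\ell_i+\ell_j)$) is essentially the paper's first case. But as written the argument has genuine gaps exactly at the degenerate overlaps you defer. First, your three regimes are not exhaustive, and the missing one is where the whole difficulty sits: $\ell_1^n,\ell_2^n$ of bounded size (neither $\to 0$ nor $\to\infty$) with $\theta_n\to 0$ and $\ell_1^n-\ell_2^n\to 0$, so $d(x_1^n,x_2^n)\to 0$. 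There the pairwise chain degenerates (both sides vanish in the limit) and no limit of the data yields a contradiction; what is needed is a \emph{uniform multiplicative} comparison valid for all angles, which is what the paper supplies via the radial homothety $H(x)=\exp_o(C_\xi\log_o x)$ of $\mathbb{H}^2$: for $\ell_1,\ell_2\leq r$ one has $\mathcal{D}_\theta(C_\xi\ell_1,C_\xi\ell_2)\geq \frac{\sinh(C_\xi r)}{\sinh r}\,\mathcal{D}_\theta(\ell_1,\ell_2)$, whence $\sinh(C_\xi r)/\sinh r\leq C$ and the explicit bound $C_\xi\leq \mathrm{Arcsinh}(C\sinh r)/r<1$. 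The extremal corner of that estimate is precisely $\theta\to 0$, $\ell_1=\ell_2=r$, so a compactness argument cannot bypass this computation; it can only repackage it.

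Second, in your case (iii) with $\theta_n\to 0$ you extract from $\|\sum\mu_i v_i\|^2=0$ only that \emph{some} pair is obtuse at $\eta_n$; that pair may consist of two nearby points with $\ell_i+\ell_j$ bounded, in which case $C_{\xi_n}(\ell_i+\ell_j)-\Delta$ does not exceed $C(\ell_i+\ell_j)$ and no contradiction follows. You need the refinement that for each fixed index $i$ (take one with $\ell_i$ large) there exists $j$ with $\widehat{y_i\eta y_j}\geq\pi/2$, obtained by pairing $v_i$ against $\sum_j\mu_j v_j=0$. With that anchoring, the paper disposes of \emph{every} configuration having some $\ell_i\geq r$ (including your unresolved mixed-scale regime, one $\ell$ bounded and the other $\to\infty$) in two lines, getting $C_\xi\leq C+\Delta/r$; combined with the radial estimate when all $\ell_i<r$, this yields an explicit $C^*$ with no contradiction or compactness argument at all. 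So your framework can be completed, but these two estimates are the missing content of the lemma, not routine bookkeeping.
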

\begin{proof}
Take $f, C, \xi$ as in the statement and define $C_\xi\geq 0$ (as well as $\eta \in Y$, $X_\xi \subset X'$, $y_i=f(x_i)\in f(X_\xi)$ and $\ell_i=d(\xi, x_i)$) as in the previous proof. 
Theorem~A gives $C_\xi\leq 1$: let us bound $C_\xi$ away from $1$ in terms of $C$ alone.

Let $\Delta>0$ be such that
\begin{equation} \label{addloss} \mathcal{D}_{\pi/2}(\ell, \ell') \geq \ell + \ell' - \Delta ~\text{ for all }~ \ell, \ell'\geq 0 \end{equation}
(using~\eqref{trigo} one can show $\Delta=\log 2$ works).
Let $r>0$ be large enough that 
\begin{equation} \label{C'} \widehat C:=C+{\Delta}/{r}<1. \end{equation}
We distinguish two cases.

$\bullet$ If $\ell_i \geq r$ for some index $i$, we use~\eqref{convhull} to find $j\neq i$ such that 
\begin{equation} \label{obtus} \theta':= \widehat{y_i \eta y_j} \geq \pi/2 \end{equation} 
and write:
\begin{align} 
C\, d(x_i, x_j) & \geq  d(y_i, y_j) & \text{(Lipschitz bound)} \notag \\
& \geq \mathcal{D}_{\theta'}(C_\xi\ell_i, C_\xi\ell_j) & \text{by \eqref{CAT}}&\mathrm{.b} \notag \\
& \geq C_\xi (\ell_i + \ell_j) - \Delta & \text{by \eqref{addloss}--\eqref{obtus}} \notag \end{align}
hence $$C_\xi\leq 
\widehat C$$ 
by \eqref{C'}, due to the triangle inequality $d(x_i, x_j) \leq \ell_i+\ell_j$ and $\ell_i \geq r$.

$\bullet$ If no such index $i$ exists, then we define $x_1, x_2\in X_\xi$ and $\theta\leq \theta' \in [0,\pi]$ as in the proof of Theorem~A and write, similar to~\eqref{conv}:
\begin{align}
C\, d(x_1, x_2) & \geq  d(y_1, y_2) & \text{(Lipschitz bound)} \notag \\
& \geq \mathcal{D}_{\theta'}(C_\xi\ell_1, C_\xi\ell_2) & \text{by \eqref{CAT}}&\mathrm{.b} \notag \\
& \geq \mathcal{D}_{\theta}(C_\xi\ell_1, C_\xi\ell_2) & \text{by \eqref{cot}} \notag \\
& \geq C'_\xi \, \mathcal{D}_{\theta}(\ell_1, \ell_2) & \text{(\emph{see \eqref{newc} below})}\label{pattes} \\ 
& \geq C'_\xi \, d(x_1, x_2) & \text{by \eqref{CAT}}&\mathrm{.a} \notag
\end{align}
where we use the new constant 
\begin{equation} \label{newc} C'_\xi:= \frac{\sinh (C_\xi r)}{ \sinh (r)}. \end{equation}
Indeed, for a basepoint $o\in \mathbb{H}^2$, the differential of the exponential map $\mathrm{exp}_o:(\mathbb{R}^2,0) \rightarrow (\mathbb{H}^2,o)$ at 
a point of the circle $\partial \mathbb{B}_0(\lambda)$
has principal values~$1$ radially and $\sinh (\lambda)$ along the circle --- this can be checked by differentiating~\eqref{trigo} near $(\ell, \ell', \theta)=(\lambda, \lambda, 0)$. 
It follows that the radial map 
$$ \begin{array}{rrcl} H: & (\mathbb{H}^2,o) & \longrightarrow & (\mathbb{H}^2,o) \\
& x & \longmapsto & \exp_{o} \big (C_\xi \log_o (x) \big ), \end{array} $$
defining a homothety of ratio $C_\xi$ on each line through $o$, satisfies
$$\mathrm{Lip}\left ( \left . H^{-1} \right |_{\mathbb{B}_o(C_\xi r)} \right ) = \frac{\sinh (r)}{\sinh(C_\xi r)}= \frac{1}{C'_\xi}$$
which means that step \eqref{pattes} holds (using $\ell_1, \ell_2\leq r$). 
Therefore, $C'_\xi \leq C$. 
Substituting in~\eqref{newc}, we find
\begin{equation} \label{arcsinh} C_\xi \leq \frac{\mathrm{Arcsinh}(C\sinh (r))}{r}<1. \end{equation}

In either case, we have bounded the Lipschitz constant $\mathrm{max}\{C,C_\xi\}$ (for the one-point extension $\xi \mapsto \eta$ of $f$) uniformly away from $1$.
\end{proof}

\subsection{Averaging maps} \label{interpol}
In curvature $\leq 0$, convex interpolation behaves well with respect to Lipschitz constants.
Namely, given $f_0, f_1 : X \rightarrow Y$, let $(f_t(x))_{t\in [0,1]}$ be the constant-speed parametrization of the geodesic segment $[f_0(x), f_1(x)]$, for all $x\in X$.
The ``barycenter'' maps $f_t:X\rightarrow Y$ thereby defined satisfy:
\emph{if $f_1$ agrees with $f_0$ on $X' \subset X$ then so does $f_t$.}

Moreover, for all $x,x'\in X$, if $(y_t)_{t\in [0,1]}$ denotes the constant-speed para\-metrization of the segment $[f_0 (x), f_1(x')]$, then
\begin{align*}
 d(f_t(x),f_t(x')) & \leq d(f_t(x),y_t) + d(y_t,f_t(x')) \\
 & \leq t\,  d(f_1(x),f_1(x')) + (1-t)\, d(f_0(x),f_0(x'))
\end{align*}
by CAT(0) comparison inequalities. It follows that 
$$ \mathrm{Lip}(f_t) \leq t\, \mathrm{Lip}(f_1) + (1-t)\, \mathrm{Lip}(f_0).$$
We will simply use the \emph{notation} $$f_t =: t\, f_1 + (1-t)\, f_0.$$
We can also iterate the construction above, to define barycenters of $N$ maps: given maps $(f_i)_{i\geq 1}$, the maps $F_N=\sum_{i=1}^N \frac{1}{N} f_i$, defined inductively on $N$ by 
$F_N : = \frac{1}{N} f_N + \frac{N-1}{N} \big ( \sum_{i=1}^{N-1} \frac{1}{N-1} f_i \big )$, inductively satisfy for any $Z\subset X$:
\begin{equation} \label{moyenne} 
\mathrm{Lip}(F_N|_Z) \leq \sum_{i=1}^N \frac{1}{N} \mathrm{Lip}(f_i|_Z).
\end{equation}
When $N\geq 3$ this construction is not robust under permutation of the $f_i$; note however that symmetric constructions do exist~\cite{las}, which also satisfy a weakened form of associativity~\cite{guk}.

\subsection{Extensions to the whole space}
We now prove Theorem~\ref{main}. 
Let $C<1$, $K\leq -1$, $m\in \mathbb{N}$ and Hadamard manifolds $X,Y$ be as in the theorem, and $C^*\in[C,1)$ be given by Lemma~\ref{1point}.

Let $f:X'\rightarrow Y$ be a $C$-Lipschitz map, where $X'\subset X$.
Again, we may assume $X'$ is compact.
By Lemma~\ref{1point}, we may consider a family of $C^*$-Lipschitz extensions $(f_\xi^*)_{\xi\in X}$ to $X'\cup\{\xi\}$, taking $\xi$ to its optimal candidate image. 
We do allow $\xi \in X'$, in which case $f_\xi^*=f$.
Small balls in $X$ and $Y$ are uniformly $(1+o(1))$-bi-Lipschitz to Euclidean balls, by the curvature bounds $0\geq \kappa_X, \kappa_Y \geq K$ 
(in fact CAT-type inequalities~\eqref{CAT} show that this $o(1)$ tolerance is quadratic in the size of the balls). 
By composition, loss-less extension in Euclidean geometry~\cite{k} implies that there exists $\varepsilon_0 \in (0,1)$ such that each $f_\xi^* \big |_{\mathbb{B}_\xi(\varepsilon_0) \cap (X'\cup \{\xi\})}$
has a $\sqrt{C^*}$-Lipschitz extension 
\begin{equation} \label{patch} \widehat{f_\xi} : \mathbb{B}_\xi(\varepsilon_0) \longrightarrow Y. \end{equation}
Let $\varepsilon < \varepsilon_0$ be small enough, and $R>1$ large enough, that 
\begin{equation} \label{buffer} \mathrm{(a)} \quad  \frac{C^* +  \varepsilon/\varepsilon_0}{1-  \varepsilon/\varepsilon_0}\leq 1 
\quad\quad \text{and} \quad\quad \mathrm{(b)} \quad
\frac{(C^*+{\Delta}/{R})+2\varepsilon/R}{1-2\varepsilon/R} \leq 1\end{equation}
where $\Delta>0$ still satisfies~\eqref{addloss}.

\begin{lemma} \label{qtree}
Let $\xi, \xi' \in X$ be distance $\geq R$ apart. 
Then $\mathrm{Lip}(G)\leq 1$ for $$G:= f \sqcup \widehat{f_\xi}\big |_{\mathbb{B}_\xi(\varepsilon)} \sqcup \widehat{f_{\xi'}} \big |_{\mathbb{B}_{\xi'}(\varepsilon)}.$$
\end{lemma}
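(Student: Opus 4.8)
The plan is to verify $\mathrm{Lip}(G)\le 1$ directly, by estimating $d(G(p),G(q))$ for $p,q$ in the domain $\Omega:=X'\cup\mathbb{B}_\xi(\varepsilon)\cup\mathbb{B}_{\xi'}(\varepsilon)$ of $G$ according to which of the three regions $p$ and $q$ lie in. First I would note $G$ is well defined: since $d(\xi,\xi')\ge R>2\varepsilon$ the two small balls are disjoint, and $\widehat{f_\xi}$ agrees with $f$ on $\mathbb{B}_\xi(\varepsilon_0)\cap X'$ because it extends $f_\xi^*$, which restricts to $f$ on $X'$; likewise for $\xi'$. The ``same-region'' pairs need no work: $f$ is $C$-Lipschitz on $X'$, and $\widehat{f_\xi}$ is $\sqrt{C^*}$-Lipschitz on $\mathbb{B}_\xi(\varepsilon_0)$ — which also settles a point of $\mathbb{B}_\xi(\varepsilon)$ paired with a point of $X'$ lying inside $\mathbb{B}_\xi(\varepsilon_0)$, since $G$ then coincides with $\widehat{f_\xi}$ on both — and symmetrically for $\xi'$; as $C,\sqrt{C^*}<1$, these are done. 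Up to exchanging $\xi\leftrightarrow\xi'$, two genuinely mixed configurations remain, and it is there that \eqref{buffer} gets consumed. Throughout, write $\eta:=f_\xi^*(\xi)=\widehat{f_\xi}(\xi)$, $\eta':=f_{\xi'}^*(\xi')=\widehat{f_{\xi'}}(\xi')$, and recall $C_\xi,C_{\xi'}\le C^*$ for the optimal ratios \eqref{cxi}.

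For a point $p\in\mathbb{B}_\xi(\varepsilon)$ and a point $q\in X'$ with $d(\xi,q)\ge\varepsilon_0$, I would route through $\eta$ and use $\varphi_\xi(\eta)=C_\xi$, together with $d(p,\xi)<\varepsilon\le\tfrac{\varepsilon}{\varepsilon_0}\,d(\xi,q)$:
\[
 d(G(p),G(q))\ \le\ \sqrt{C^*}\,d(p,\xi)+C_\xi\,d(\xi,q)\ \le\ \Big(C^*+\tfrac{\varepsilon}{\varepsilon_0}\Big)\,d(\xi,q).
\]
Feeding in $d(\xi,q)\le d(p,\xi)+d(p,q)\le\tfrac{\varepsilon}{\varepsilon_0}\,d(\xi,q)+d(p,q)$, i.e.\ $d(\xi,q)\le d(p,q)/(1-\varepsilon/\varepsilon_0)$, this becomes $d(G(p),G(q))\le\frac{C^*+\varepsilon/\varepsilon_0}{1-\varepsilon/\varepsilon_0}\,d(p,q)\le d(p,q)$ by \eqref{buffer}(a).

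The remaining and main case is $p\in\mathbb{B}_\xi(\varepsilon)$, $q\in\mathbb{B}_{\xi'}(\varepsilon)$. Routing through $\eta$ and $\eta'$ gives $d(G(p),G(q))\le\sqrt{C^*}\,d(p,\xi)+d(\eta,\eta')+\sqrt{C^*}\,d(q,\xi')\le 2\varepsilon+d(\eta,\eta')$, so the whole matter reduces to the estimate $d(\eta,\eta')\le C^*\,d(\xi,\xi')+\Delta$ — a quantitative form of the principle that at large scale $Y$ behaves like a tree, where the optimal-image map has no loss. Granting it, and using $d(\xi,\xi')\ge R$ to absorb $\Delta\le\tfrac{\Delta}{R}d(\xi,\xi')$ and $2\varepsilon\le\tfrac{2\varepsilon}{R}d(\xi,\xi')$, together with $d(\xi,\xi')\le d(p,q)+2\varepsilon\le d(p,q)+\tfrac{2\varepsilon}{R}d(\xi,\xi')$, I would conclude
\[
 d(G(p),G(q))\ \le\ \Big(C^*+\tfrac{\Delta}{R}+\tfrac{2\varepsilon}{R}\Big)d(\xi,\xi')\ \le\ \frac{(C^*+\Delta/R)+2\varepsilon/R}{1-2\varepsilon/R}\,d(p,q)\ \le\ d(p,q)
\]
by \eqref{buffer}(b), which finishes the proof.

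Proving the estimate $d(\eta,\eta')\le C^*\,d(\xi,\xi')+\Delta$ is the crux, and the step I expect to be the main obstacle. The approach is a first-variation argument at the optimal images, in the spirit of Theorem~A. Assuming $\eta\ne\eta'$ (otherwise it is trivial) and hence $C_\xi,C_{\xi'}>0$, the point $\eta'$ lies in the convex hull of finitely many $f(x_i)$ with $x_i\in X_{\xi'}$, as in \eqref{convhull} (whose derivation does not use $C_\xi\ge1$); pairing $\sum_i\lambda_i\log_{\eta'}(f(x_i))=0$ against $\log_{\eta'}(\eta)$ yields some $x^*\in X_{\xi'}\subset X'$ with $\widehat{\eta\,\eta'\,f(x^*)}\ge\pi/2$. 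Then \eqref{CAT}.b (with $\mathcal D_\theta$ monotone in $\theta$) and \eqref{addloss} give $d(\eta,f(x^*))\ge d(\eta,\eta')+d(\eta',f(x^*))-\Delta$; since $x^*\in X_{\xi'}$ one has $d(\eta',f(x^*))=C_{\xi'}\,d(\xi',x^*)$, while $\varphi_\xi(\eta)=C_\xi$ gives $d(\eta,f(x^*))\le C_\xi\,d(\xi,x^*)\le C_\xi\big(d(\xi,\xi')+d(\xi',x^*)\big)$, so
\[
 d(\eta,\eta')\ \le\ C_\xi\,d(\xi,\xi')+(C_\xi-C_{\xi'})\,d(\xi',x^*)+\Delta.
\]
The difficulty is that the error $(C_\xi-C_{\xi'})\,d(\xi',x^*)$ is not controlled on its own, since $x^*$ may be far from both $\xi$ and $\xi'$. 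The resolution is to run the mirror argument (first variation of $\varphi_\xi$ at $\eta$ in the direction of $\eta'$), producing $x^{**}\in X_\xi\subset X'$ with $d(\eta,\eta')\le C_{\xi'}\,d(\xi,\xi')+(C_{\xi'}-C_\xi)\,d(\xi,x^{**})+\Delta$: whichever of $C_\xi,C_{\xi'}$ is the smaller makes the error term of the corresponding bound $\le 0$, so $d(\eta,\eta')\le\min(C_\xi,C_{\xi'})\,d(\xi,\xi')+\Delta\le C^*\,d(\xi,\xi')+\Delta$. (If $C_\xi$ or $C_{\xi'}$ vanishes, $f(X')$ is a single point and $\eta=\eta'$, so the estimate is trivial.) Apart from this point, the argument is routine bookkeeping with the triangle inequality and \eqref{buffer}.
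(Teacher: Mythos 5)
Your proposal is correct and follows essentially the same route as the paper: the same case analysis with~\eqref{buffer}, and the same key estimate $d(\eta,\eta')\le C^*d(\xi,\xi')+\Delta$ proved by a first-variation/convex-hull argument at an optimal image combined with \eqref{CAT}.b and~\eqref{addloss}. The only cosmetic difference is that you run the variation at both $\eta$ and $\eta'$ and keep the bound whose error term is nonpositive, whereas the paper assumes $C_{\xi'}\le C_\xi$ by symmetry and runs the single variation at $\eta$ (your ``mirror'' argument), which is the same computation.
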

\begin{proof}
Consider $x,x'\in X'\cup \mathbb{B}_\xi(\varepsilon) \cup \mathbb{B}_{\xi'}(\varepsilon)$. We distinguish several cases.

\noindent $\bullet$ (i) If $x,x'\in X'$ then $d(G(x), G(x'))=d(f(x), f(x'))\leq C d(x,x')$ because $f$ is $C$-Lipschitz.

\noindent $\bullet$ (ii) If $x,x'\in \mathbb{B}_\xi(\varepsilon)$ then by construction of $\widehat{f_\xi}$,
\begin{equation} \label{nest}
d(G(x), G(x'))=d(\widehat{f_\xi}(x), \widehat{f_\xi}(x'))\leq \sqrt{C^*} d(x,x'). 
\end{equation}

\noindent $\bullet$ (iii) If $x,x'\in \mathbb{B}_{\xi'}(\varepsilon)$, we do as in (ii), exchanging $\xi$ and $\xi'$. 

\noindent $\bullet$ (iv) If $x\in \mathbb{B}_\xi(\varepsilon)$ and $x'\in X'$, we distinguish two cases:
if $x'\in X'\cap \mathbb{B}_\xi(\varepsilon_0)$, then~\eqref{nest} still applies. If not, then
we compute
$$ \frac{d(G(x'), G(x))}{d(x',x)} 
\leq \frac{d(G(x'), G(\xi)) + d(G(\xi), G(x))}{d(x',\xi) - d(\xi, x)} 
 \leq \frac{C^* d(x',\xi) + d(\xi, x)}{d(x',\xi) - d(\xi, x)} $$
which is $\leq 1$ by~\eqref{buffer}.a, since $d(\xi,x)\leq \varepsilon$ and $d(x',\xi)\geq \varepsilon_0$.

\noindent $\bullet$ (v) If $x\in \mathbb{B}_{\xi'}(\varepsilon)$ and $x'\in X'$, we do as in (iv), exchanging $\xi$ and $\xi'$. 

\noindent $\bullet$ (vi) Up to exchanging $x$ and $x'$, the only remaining case is that $x\in \mathbb{B}_{\xi}(\varepsilon)$ and $x'\in \mathbb{B}_{\xi'}(\varepsilon)$.
It is only here that we will use the assumption $d(\xi, \xi')\geq R$.

We first treat the case $(x,x')=(\xi, \xi')$.
Recall from~\eqref{cxi} the optimal candidates $\eta=G(\xi)$ and $\eta'=G(\xi')$ and optimal constants
$C_\xi, C_{\xi'}<1$ used in the proofs of Theorem~A and Lemma~\ref{1point}. 
By symmetry, we may assume 
\begin{equation} \label{match3}C_{\xi'} \leq C_\xi  \end{equation}
and by definition of $C_{\xi'}$ we have 
\begin{equation} \label{match} {d(\eta',f(z))} \leq C_{\xi'}\, {d(\xi', z)} ~\text{ for all }~z\in X'.\end{equation}
Recall also from~\eqref{xxi} the compact subset $X_\xi \subset X'$, satisfying
\begin{equation} \label{match2} {d(\eta,f(z))} = C_{\xi}\, {d(\xi, z)} ~\text{ for all }~z\in X_\xi. \end{equation}
By Lemma~\ref{1point} we know
\begin{equation} \label{match4}  C_\xi \leq C^* < 1. \end{equation}
Since $\eta$ lies by~\eqref{convhull} in the convex hull of $f(X_\xi)$, we can find $y_1=f(x_1) \in f(X_\xi)$ such that $\widehat{\eta' \eta y_1} \geq \frac{\pi}{2}$. Then, 
\begin{align}
 d(\eta, \eta')  & \leq d(y_1,\eta')-d(y_1, \eta)+\Delta & \text{by \eqref{CAT}.b--\eqref{addloss}} \notag \\ 
 & \leq C_{\xi'} d(x_1,\xi')-C_\xi d(x_1, \xi) + \Delta & \text{by \eqref{match}--\eqref{match2}} \notag \\
 & \leq C_\xi \left ( d(x_1,\xi')-d(x_1, \xi) \right )+ \Delta & \text{by \eqref{match3}} \label{comput} \\
 & \leq C_\xi \, d(\xi', \xi) + \Delta & \text{(triangle inequality)} \notag \\
 & \leq C^* \, d(\xi', \xi) + \Delta & \text{by \eqref{match4}} & . \notag 
\end{align}
 Since by assumption $d(\xi, \xi')\geq R$, it follows that 
\begin{align} 
{d(\eta, \eta')}/{d(\xi, \xi')} & \leq C^*+{\Delta}/{R}  \quad <1 & \text{by \eqref{buffer}.b.}  \label{buffer3} \end{align}
This 
(\footnote{~For $Y$ a tree and $X$ a general metric space, a variant of the computation~\eqref{comput} holds with $\Delta=0$, and a variant of the argument in~\S\ref{prolasch} yields $C_\xi\leq C$.
Taking each $\xi, \xi' \in X \smallsetminus X'$ (independently) to its optimal image $\eta, \eta' \in Y$ therefore produces a global, loss-less extension of $f$: this was proved in~\cite[Th.~B]{las}, as alluded to in the Introduction.}) 
deals with the case $(x,x')=(\xi, \xi')$.

The general case of (vi) is now similar to (iv-v): we can compute
\begin{align*} \frac{d(G(x), G(x'))}{d(x,x')} 
& \leq \frac{d(G(x), \eta) + d(\eta, \eta') + d(\eta', G(x'))}{-d(x,\xi) + d(\xi, \xi') - d(\xi', x')} \\ 
& \leq \frac{(C^* + \Delta/R) d(\xi,\xi') + 2\varepsilon }{d(\xi,\xi') - 2 \varepsilon} & \text{by~\eqref{buffer3}} \\
& = \frac{(C^* + \Delta/R) +{2\varepsilon}/{d(\xi,\xi')}}{1-{2\varepsilon}/{d(\xi,\xi')}} \quad \leq 1 & ~\text{by~\eqref{buffer}}&\mathrm{.b,} \end{align*}
using again $d(\xi, \xi')\geq R$.
Therefore, $\mathrm{Lip}(G)\leq 1$.
\end{proof}

To finish proving Theorem~\ref{main}, consider a maximal $\varepsilon$-sparse subset 
$$\Xi=\big \{\xi_i \big \}_{i\in \mathbb{N}} \subset X.$$ 
This means that the closed balls $\overline{\mathbb{B}_{\xi_i}(\varepsilon)}$ cover $X$ but the $\mathbb{B}_{\xi_i}(\varepsilon/2)$ are pairwise disjoint (i.e.\ the $\xi_i \in \Xi$ are mutually $\geq \varepsilon$ apart).
For example, $\Xi$ can be constructed from a dense sequence $(x_\iota)_{\iota\in \mathbb{N}}$ of $X$ by setting $\xi_1:=x_1$ and letting inductively $\xi_i$ be the first $x_\iota$ lying outside $\mathbb{B}_{\xi_1} (\varepsilon) \cup \dots \cup \mathbb{B}_{\xi_{i-1}} (\varepsilon)$. 

Since $0\geq \kappa_X \geq -1$, the volume of a ball in $X$ is bounded above (resp.\ below) by the volume of a ball of the same radius in hyperbolic space $\mathbb{H}=\mathbb{H}^{\dim(X)}$ (resp.\ in Euclidean space $\mathbb{E}=\mathbb{R}^{\dim(X)}$): indeed, CAT-type inequalities~\eqref{CAT} show that the \emph{Jacobians} of the exponential maps in $\mathbb{H}$, $X$, and $\mathbb{E}$ form, in that order, a weakly decreasing sequence.
Let $N \in \mathbb{N}$ satisfy
\begin{equation} \label{bins}
N \geq \frac{\mathrm{Vol}_{\mathbb{H}}(\mathbb{B}(R+\varepsilon/2))}{\mathrm{Vol}_{\mathbb{E}}(\mathbb{B}(\varepsilon/2))}.
\end{equation}
Each ball $\mathbb{B}_{\xi_i}(R)$ contains at most $N$ points of $\Xi$, because the $\varepsilon/2$-balls centered at those points are disjoint and contained in $\mathbb{B}_{\xi_i}(R+\varepsilon/2)$.
Therefore, we can find a partition of $\Xi$ into ``bins''
$$\Xi=\Xi_1 \sqcup \dots \sqcup \Xi_N$$ 
such that any distinct $\xi, \xi' \in \Xi_j$ satisfy $d(\xi, \xi')\geq R$:
for example, the $\Xi_j$ can be constructed inductively by putting $\xi_1$ in $\Xi_1$, and then dropping in turn each $\xi_i$ into any bin $\Xi_j$ disjoint from $\{\xi_1, \dots, \xi_{i-1} \} \cap \mathbb{B}_{\xi_i}(R)$. 

Recall from~\eqref{patch} the $\sqrt{C^*}$-Lipschitz maps $\widehat{f_{\xi_i}}$ defined in $\varepsilon_0$-neighborhoods of the $\xi_i$.
For each $1\leq j \leq N$, define the map 
$$F_j:= \Big ( \bigsqcup_{\xi\in \Xi_j} \widehat{f_\xi}|_{\mathbb{B}_\xi(\varepsilon)} \sqcup f \Big ) :\quad  \bigcup_{\xi \in \Xi_i} \mathbb{B}_\xi(\varepsilon) \cup X' \longrightarrow Y.$$
By Lemma~\ref{qtree}, since the Lipschitz property can be tested one pair of points at a time, we have in fact $\mathrm{Lip}(F_j) \leq 1$.
By Theorem~A, the $F_j$ admit $1$-Lipschitz extensions $\widehat{F}_j$ to $X$.
Finally we claim that 
\begin{equation} \label{finalC'} F:=\sum_{j=1}^N \frac{1}{N} \widehat{F}_j \: : X \longrightarrow Y
~\text{ satisfies }~
 \mathrm{Lip}(F) \leq 1 - \frac{1-\sqrt{C^*}}{N} =:C' <1. \end{equation}
Indeed, this can be verified in restriction to each ball $\overline{\mathbb{B}_{\xi_i}(\varepsilon)}$ of the covering of~$X$: if $\xi_i$ falls in the bin $\Xi_j$, then on that ball $\widehat{F}_j$ is $\sqrt{C^*}$-Lipschitz by construction while all other $\widehat{F}_{j'}$ are $1$-Lipschitz; we conclude using~\eqref{moyenne}. \qed

\section{Conclusion} \label{seconj}
It seems natural to expect that the lower bound $K$ on curvature, and the upper bound $m$ on dimension, are not necessary in Theorem~\ref{main}.
\begin{conjecture}\label{conj}
For any $C<1$ there exists $C'\in (C,1)$ such that for any Hadamard manifolds $X,Y$ satisfying $\kappa_X\geq -1 \geq \kappa_Y$, every $C$-Lipschitz map from a subset of $X$ to $Y$ has a $C'$-Lipschitz extension to $X$: 
$$\mathcal{L}_{X,Y}(C)\leq C' < 1.$$
\end{conjecture}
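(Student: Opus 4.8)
The plan is to keep the architecture of the proof of Theorem~\ref{main} and to pin down precisely where the hypotheses $K$ and $m$ are used, then to replace those two steps by arguments uniform in curvature and dimension. The first observation is that the computation~\eqref{comput} behind Lemma~\ref{qtree}, carried out \emph{without} assuming $d(\xi,\xi')\ge R$ and symmetrized in $\xi,\xi'$, already gives a clean estimate for the ``optimal image'' map $F_0:X\to Y$, $\xi\mapsto\eta_\xi$ (with $\eta_\xi$ the minimizer in~\eqref{cxi}, and the convention $\eta_\xi=f(\xi)$ when $\xi\in X'$, so that $F_0$ extends $f$): using Theorem~A and Lemma~\ref{1point} to bound $C_\xi\le C^*<1$, one obtains
\begin{equation}\label{coarse}
d(F_0(\xi),F_0(\xi'))\ \le\ C^*\,d(\xi,\xi')+\Delta\qquad\text{for all }\xi,\xi'\in X,
\end{equation}
with $\Delta=\log 2$ as in~\eqref{addloss}. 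Fixing $R$ large enough that $\widehat C:=C^*+\Delta/R<1$, estimate~\eqref{coarse} already yields $d(F_0(\xi),F_0(\xi'))\le\widehat C\,d(\xi,\xi')$ whenever $d(\xi,\xi')\ge R$. Thus $F_0$ is ``good'' at scales $\ge R$, uniformly in everything; the entire difficulty is now confined to the band $d(\xi,\xi')<R$, where the additive term in~\eqref{coarse} is fatal.

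Next, removing $K$. In the proof of Theorem~\ref{main} a lower curvature bound on $Y$ was used only to build the Euclidean patches~\eqref{patch} at a scale $\varepsilon_0$ independent of $Y$. The point to exploit is that extra negative curvature in $Y$ is \emph{favorable}: the CAT$(-1)$ inequality~\eqref{CAT}.b points in the helpful direction, so at any fixed scale a very negatively curved $Y$ is at least as well-behaved as $\mathbb H^2$. I would accordingly replace~\eqref{patch} by a small-scale extension estimate proved directly from the comparison inequalities~\eqref{CAT}, or from the CAT-space form of Kirszbraun's theorem in~\cite{las}, giving for some $\varepsilon_0>0$ depending on $C$ alone a $\sqrt{C^*}$-Lipschitz extension $\widehat{f_\xi}$ of $f_\xi^*\big|_{\mathbb B_\xi(\varepsilon_0)\cap(X'\cup\{\xi\})}$ to $\mathbb B_\xi(\varepsilon_0)$. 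I expect this step to be technical but routine.

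The main obstacle is removing the dimension bound $m$. As the argument stands, the dimension enters irreducibly in the final assembly: partitioning a maximal $\varepsilon$-separated set $\Xi\subset X$ into $N$ bins of mutually $R$-separated points and averaging the corresponding $1$-Lipschitz extensions gives, through~\eqref{moyenne}, the constant $C'=1-(1-\sqrt{C^*})/N$, and $N$ must be at least the number of disjoint $\varepsilon/2$-balls fitting in an $R$-ball, which grows with $\dim X$. This is structural, not cosmetic: by~\eqref{coarse}, patches based at $\varepsilon$-separated points can genuinely disagree by about $\Delta$ at medium scale, hence can be merged without loss only when at most one of them is active on any given $\varepsilon$-ball, and enforcing that forces $N$ to depend on the dimension. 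A dimension-free proof must therefore replace the finite averaging by a different mechanism for repairing $F_0$ at scales $<R$. Candidates include a curvature-driven regularization of the $1$-Lipschitz extension supplied by Theorem~A --- for instance a harmonic-map heat flow, well-behaved for nonpositively curved targets --- a continuous ``center of mass'' of the family $(\eta_\xi)_{\xi\in X}$ against a kernel of width comparable to $R$, or a quantitative strengthening of the coarse-tree comparison behind Theorem~B of~\cite{las} together with the loss-free extendability of maps into metric trees. Each of these meets a genuine difficulty: the Bochner term arising from $\kappa_X\ge-1$ is itself controlled only in terms of $\dim X$; center-of-mass averaging does not erase the additive constant $\Delta$ at small scales; and approximating $Y$ by a tree introduces an error one still has to correct. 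Carrying out this small-scale repair of $F_0$ uniformly in the dimension is, I believe, exactly the crux of Conjecture~\ref{conj}.
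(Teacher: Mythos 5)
The statement you are addressing is Conjecture~\ref{conj}, which the paper explicitly leaves open: there is no proof of it in the paper to compare against, and your proposal does not supply one either. What you have written is a (largely accurate) diagnosis of where the two extra hypotheses of Theorem~\ref{main} enter --- the lower bound $K$ on $\kappa_Y$ only in producing the near-Euclidean patches~\eqref{patch} via Kirszbraun, and the dimension bound $m$ only through the covering number $N$ of~\eqref{bins} that governs the averaging loss in~\eqref{finalC'} --- together with a correct observation that the computation~\eqref{comput} gives the coarse bound $d(\eta,\eta')\le C^*\,d(\xi,\xi')+\Delta$ for the ``optimal image'' assignment without any separation hypothesis, so that all difficulty sits at scales below $R$. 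This matches the paper's own reading of its argument. But you then list candidate mechanisms (heat flow, kernel-averaged centers of mass, tree approximation), note that each one fails for a concrete reason, and conclude that repairing $F_0$ at small scales uniformly in the dimension ``is exactly the crux.'' That is a statement of the problem, not a proof; the gap is the entire content of the conjecture.

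One specific claim deserves pushback: that removing $K$ is ``technical but routine.'' The local patch~\eqref{patch} requires extending a $C^*$-Lipschitz map on a small ball with a constant ($\sqrt{C^*}$) that is still strictly below $1$, and this is not delivered by the comparison inequalities~\eqref{CAT} or by the CAT-space Kirszbraun theorem of~\cite{las}: rescaling the metric on either $X$ or $Y$ to bring the Lipschitz constant into the range covered by Theorem~A moves the curvature bounds the wrong way (scaling $X$ by $\sqrt{C^*}$ makes $\kappa_X\ge -1/C^*$, so Theorem~A only applies to constants $\ge 1/\sqrt{C^*}>1$; scaling $Y$ fails similarly), precisely because $\kappa_Y$ is unbounded below and small balls of $Y$ need not be close to Euclidean. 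Obtaining a sub-unit local extension constant without a lower bound on $\kappa_Y$ is thus itself an instance of the same ``loss strictly less than $1$'' problem, not a routine reduction --- which is presumably why the hypothesis $K$ appears in Theorem~\ref{main} at all. So both the removal of $K$ and the removal of $m$ remain open in your proposal, and the conjecture is not proved.
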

\noindent This statement should still hold if both the map and its extension are required to be equivariant under a given pair of actions on $X$ and $Y$: see~\cite{guk}.

\smallskip

Loss does occur, i.e.\ $C'>C$ in general, as testified by many examples. 
For instance, since $\ell \mapsto \mathcal{D}_{2\pi/3} (\ell, \ell)$ is strictly convex (see~\eqref{trigo}), a map $f$ that takes just the vertices of, say, a medium-sized equilateral triangle of $\mathbb{H}^2$ to the vertices of a smaller one, cannot be extended 
without loss to the center of the triangle.
In such examples however, the ratio 
${(1-C')}/{(1-C)}$ 
never seems to get very small.
Thus we propose the following strengthening:
\begin{conjecture} \label{conj2}
There exists a universal $\alpha \in (0,1)$ such that $\mathcal{L}_{X,Y}(C) \leq C^\alpha$.
\end{conjecture}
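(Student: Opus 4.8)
The plan is to follow the architecture of the proof of Theorem~\ref{main} almost verbatim, tracking the one extra parameter $\Delta$ through all estimates and then optimizing. Recall that in that proof, the output constant has the shape $C' = 1 - (1-\sqrt{C^*})/N$, where $C^*$ comes from Lemma~\ref{1point} and $N$ comes from the volume-ratio bound~\eqref{bins}, which depends on $R$, which in turn depends (via~\eqref{buffer} and~\eqref{C'}) on $\Delta = \log 2$. The key observation is that every one of these dependencies degrades \emph{continuously} as $C \to 1^-$ but at a \emph{controlled polynomial rate}, provided one is careful. So the first step is to re-examine Lemma~\ref{1point} and extract not just ``$C^* < 1$'' but an explicit bound of the form $1 - C^* \geq \beta_1 (1-C)$ for some universal $\beta_1 > 0$ — or, if that fails, $1 - C^* \geq (1-C)^{\gamma_1}$ for universal $\gamma_1$. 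Looking at~\eqref{arcsinh} and the first bullet $C_\xi \leq \widehat{C} = C + \Delta/r$: the obtuse case gives a linear gap only if $r$ may be taken bounded, but $r$ must grow as $C\to 1$, forcing $\Delta/r \to 0$ at the same rate as $1-C$, so this case is delicate and likely governs $\gamma_1$.

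The second step is to redo the global patching (Lemma~\ref{qtree} and the bin argument) keeping $C$, $\varepsilon$, $\varepsilon_0$, $R$, $N$ as explicit functions of $1-C^*$. From~\eqref{buffer} we need $\varepsilon/\varepsilon_0 \lesssim 1-C^*$ and $R \gtrsim \Delta/(1-C^*) = \log 2/(1-C^*)$, hence by~\eqref{bins} the bin count is $N \lesssim \mathrm{Vol}_{\mathbb H}(\mathbb{B}(R))/\mathrm{Vol}_{\mathbb E}(\mathbb{B}(\varepsilon/2))$, which in dimension $m$ grows like $e^{(m-1)R}$ times a power of $\varepsilon^{-1}$ — \emph{exponentially} in $1/(1-C^*)$. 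That is the crux: feeding this into~\eqref{finalC'} gives $1 - C' \asymp (1-\sqrt{C^*})/N \asymp (1-C^*)\, e^{-c/(1-C^*)}$, which decays \emph{faster} than any power of $1-C^*$ as $C \to 1$, so the naive bookkeeping does \emph{not} yield $C' \leq C^\alpha$. Overcoming this is the main obstacle, and I expect it requires a genuinely different, more efficient merging scheme than ``$N$ disjoint bins averaged with equal weights'' — for instance a hierarchical (dyadic-in-scale) interpolation, or exploiting that at scale $\gg R$ the target is coarsely a tree so that Theorem~B of~\cite{las} applies \emph{exactly} (zero loss) and only the bounded-scale part contributes, decoupling the large-$R$ blow-up from the loss.

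Concretely, the refined approach I would pursue: split $X'$-geometry into a ``far'' regime (pairs at distance $\geq R$) and a ``near'' regime (distance $< R$). For the far regime, imitate footnote on p.~\pageref{buffer3}: as curvature is $\leq -1$, balls of radius $R$ in $Y$ are $\delta$-hyperbolic with $\delta$ \emph{bounded independently of $C$}, so the optimal-image map $\xi \mapsto \eta_\xi$ is $(C + o(1))$-Lipschitz on the far regime with $o(1)\to 0$ as $R \to \infty$ at a rate independent of how close $C$ is to $1$ — giving a \emph{uniform} (not $C$-dependent) tree-like contribution. For the near regime, one is reduced to extending over a space of bounded diameter $O(R)$, where a single application of Lemma~\ref{1point}-type reasoning plus one Euclidean patch suffices and the loss is polynomial in $1-C$ by the first step. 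Combining via one interpolation between the ``tree extension'' and the ``bounded-diameter extension'' then yields $1 - C' \gtrsim (1-C)^{\gamma}$ for a universal $\gamma$, i.e.\ $C' \leq C^{\alpha}$ with $\alpha = 1/\gamma$ after adjusting. The serious gaps to fill are (i) making the ``coarsely a tree at scale $R$'' statement quantitative with $C$-independent constants, and (ii) checking the single interpolation step does not reintroduce a multiplicative blow-up — both plausible but not automatic, which is why I would state Conjecture~\ref{conj2} rather than a theorem.
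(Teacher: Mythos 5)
You should first note that the statement you were asked to prove is not a theorem of the paper at all: it is Conjecture~\ref{conj2}, which the paper explicitly leaves open (indeed it remarks that the question is open even for $C$ close to $0$). So there is no ``paper proof'' to match, and your proposal --- as you yourself concede in the last sentence --- is not a proof either. To your credit, your diagnosis of the obstruction coincides exactly with the paper's own concluding discussion: running the machinery of Theorem~\ref{main} with explicit constants gives $1-C^*\asymp(1-C)^2$, forces $R\approx(1-C)^{-2}$, and then the volume bound~\eqref{bins} makes $N$, hence $1/(1-C')$, exponentially large in $(1-C)^{-2}$, which is ``a far cry'' from the conjectured polynomial bound $C^\alpha$. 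Identifying that the equal-weight $N$-bin averaging is the bottleneck is correct, but it is a restatement of why the problem is open, not progress toward closing it.

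The remedies you sketch contain genuine gaps beyond the two you flag. First, your ``far regime'' claim --- that the optimal-image map $\xi\mapsto\eta_\xi$ is $(C+o(1))$-Lipschitz on pairs at distance $\geq R$, with rate independent of $C$ --- is not supported by the paper's estimates: the computation~\eqref{comput} gives $d(\eta,\eta')\leq C_\xi\, d(\xi,\xi')+\Delta$, and the multiplicative constant $C_\xi$ is only bounded by $C^*$ (which approaches $1$ like $(1-C)^2$), not by $C$; the sharper bound $C_\xi\leq C+\Delta/r$ from Lemma~\ref{1point} is available only when $\xi$ is itself far from $X'$, which has nothing to do with $d(\xi,\xi')$ being large. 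Second, your near/far split is a partition of \emph{pairs} of points, not a decomposition of the domain, so there is no well-defined ``tree extension'' and ``bounded-diameter extension'' to interpolate between; and even granting such a decomposition, the bounded-diameter piece has diameter $O(R)$ with $R\to\infty$ as $C\to1$, so the exponential volume growth you are trying to avoid re-enters through the covering of that region. Until these points are made quantitative --- precisely items (i) and (ii) you list, plus the $C_\xi$ issue above --- the statement remains, as the paper says, a conjecture.
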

Interestingly, this conjecture appears to be open even for $C$ close to $0$.
The article~\cite{lps} shows that $\mathcal{L}_{X,Y}(C)/C$ is bounded above (which for small $C$ is a stronger property), but only under some extra assumptions on the Hadamard manifold~$Y$, such as fixed dimension with pinched curvature.

As $C$ approaches $1$, bounds on the constant $C'$ extracted from our proof of Theorem~\ref{main} are not very stringent. 
Fixing $K \leq -1$ and the dimension, we can estimate~\eqref{arcsinh} for $r=\frac{2\Delta}{1-C}$ to find that $1-C^*$ is on the order of $(1-C)^2$, yielding
$\varepsilon_0 \approx 1-C$, $\varepsilon \approx (1-C)^3$ and crucially $R\approx (1-C)^{-2}$ in~\eqref{buffer}.
In~\eqref{bins} this entails 
$N\approx \mathrm{e}^{-(\Lambda+o(1))/(1-C)^2}$ 
for some $\Lambda>0$, 
hence in~\eqref{finalC'} $$1-C'\approx \mathrm{e}^{-\frac{\Lambda+o(1)}{(1-C)^2}} \quad \text{as } C \rightarrow 1^-,$$
i.e.\ our upper bound $C'$ for $\mathcal{L}_{X,Y}(C)$ is a far cry from Conjecture~\ref{conj2}.


\begin{thebibliography}{AMS5}

\bibitem{guk} \textsc{F.\ Gu\'eritaud, F.\ Kassel}: \emph{Maximally stretched laminations on geometrically finite hyperbolic manifolds}, Geometry \& Topology {\bf 21} (2017), 693--840. 
\bibitem{k} \textsc{M.D.\ Kirszbraun}: \emph{\"Uber die zusammenziehenden und Lipschitzsche Transformationen}, Fund.\ Math.\ {\bf 22} (1934), 77--108 
\bibitem{las} \textsc{U.\ Lang, V.\ Schr\"oder}: \emph{Kirszbraun's Theorem and metric spaces of bounded curvature}, Geom.\ Funct.\ Anal.\ {\bf 7} (1997), 535--560 
\bibitem{lps} \textsc{U.\ Lang, B.\ Pavlovi\'c, V.\ Schr\"oder}: \emph{Extensions of Lipschitz maps into Hadamard spaces}, Geom.\ Funct.\ Anal.\ {\bf 10}-6 (2000), 1527--1553 
\bibitem{m} \textsc{E.J.\ McShane}: \emph{Extension of range of functions}, Bull.\ Amer.\ Math.\ Soc.\ {\bf 40} (1934), 837--842 
\bibitem{cat} \textsc{V.\ Pambuccian, T.\ Zamfirescu}: \emph{Paolo Pizzetti: The forgotten originator of triangle comparison geometry}, Historia Mathematica {\bf 38} (2011), 415-422
\bibitem{v} \textsc{F.A.\ Valentine}: \emph{Contractions in non-Euclidean spaces}, Bull.\ Amer.\ Math.\ Soc.\ {\bf 50} (1944), 710--713 

\end{thebibliography}
\end{document}